\theoremstyle{plain}
\newtheorem{theorem}{Theorem}
\newtheorem{corollary}[theorem]{Corollary}
\newtheorem{lemma}[theorem]{Lemma}
\newtheorem{proposition}[theorem]{Proposition}
\theoremstyle{definition}
\newtheorem{example}[theorem]{Example}
\def\one{\mathbb 1}
\begin{document}
\baselineskip 18pt

\title[Unbounded absolute weak convergence in Banach lattices]
      {Unbounded absolute weak convergence in Banach lattices}

\author[O.~Zabeti]{Omid Zabeti}

\address[O.~Zabeti]
  {Department of Mathematics, Faculty of Mathematics,
   University of Sistan and Baluchestan, Zahedan,
   P.O. Box 98135-674. Iran}
\email{o.zabeti@gmail.com}

\keywords{Banach lattice, unbounded absolute weak convergence, unbounded absolute weak topology, order continuous Banach lattice, reflexive Banach lattice.}
\subjclass[2010]{Primary: 46B42, 54A20. Secondary: 46B40.}

\begin{abstract}
Several recent papers investigated unbounded versions of order and
norm convergences in Banach lattices. In this paper, we study the unbounded variant of weak convergence and its relationship with other convergences. In particular, we characterize order continuous Banach lattices and reflexive Banach lattices in
terms of this convergence.
\end{abstract}

\date{\today}

\maketitle
Throughout this paper, $E$ stands for a Banach lattice. A net $(x_{\alpha})$ in $E$ is said to
be {\bf unbounded order convergent} ($uo$-convergent) to $x$ if for every $u\in E_{+}$ the net
$(|x_{\alpha}-x|\wedge u)$
converges to zero in order. It is called {\bf unbounded norm convergent}
($un$-convergent) to $x$ if
$\||x_{\alpha}-x|\wedge u\|\rightarrow 0$ for every $u\in E_{+}$. These concepts were
investigated in [DOT17, GX14, G14, GTX17, KMT17]. We consider the unbounded
version of weak convergence. We say that $(x_{\alpha})$ is {\bf unbounded absolutely weakly
convergent} ($uaw$-convergent) to $x$ if $(|x_{\alpha}-x|\wedge u)$
converges to zero weakly for
every $u\in E_{+}$; we write $x_{\alpha}\xrightarrow{uaw}x$. For undefined terminology, we refer the reader to
[AB06, GTX17].
\begin{lemma}\label{101}
\begin{itemize}

\item[\em (i)]{$uaw$-limits are unique};
\item[\em (ii)]{If $x_{\alpha}\xrightarrow{uaw}x$ and $y_{\beta}\xrightarrow{uaw}y$, then $a x_{\alpha} + b y_{\beta} \xrightarrow{uaw} a x+b y$, for any scalars $a,b$};
 \item[\em (iii)]{If $x_{\alpha}\xrightarrow{uaw}x$, then  $y_{\beta}\xrightarrow{uaw}x$, for every subnet $(y_{\beta})$ of $(x_{\alpha})$};
 \item[\em (iv)]{If $x_{\alpha}\xrightarrow{uaw}x$, then    $|x_{\alpha}|\xrightarrow{uaw}|x|$};
  \item[\em (v)]{$x_{\alpha}\xrightarrow{uaw}x$ iff    $(x_{\alpha}-x)\xrightarrow{uaw}0$}.
\end{itemize}
\end{lemma}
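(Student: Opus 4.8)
The plan is to reduce the whole lemma to a few elementary Riesz-space identities together with one structural fact about Banach lattices. On the order side I will use: the reverse triangle inequality $\big||z|-|w|\big|\le|z-w|$; the subadditivity $(p+q)\wedge u\le(p\wedge u)+(q\wedge u)$ for $p,q,u\in E_{+}$; monotonicity of truncation ($0\le a\le b$ implies $a\wedge u\le b\wedge u$); and the positive-homogeneity identity $(\lambda p)\wedge u=\lambda\bigl(p\wedge(\lambda^{-1}u)\bigr)$ for $\lambda>0$. On the analytic side I will use that the norm dual $E'$ is again a Banach lattice, so every $\varphi\in E'$ splits as $\varphi=\varphi^{+}-\varphi^{-}$ with $\varphi^{\pm}\in E'_{+}$. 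This yields two tools used repeatedly: a \emph{squeeze principle} --- if $0\le z_{\gamma}\le w_{\gamma}$ in $E$ and $w_{\gamma}\to 0$ weakly, then $z_{\gamma}\to 0$ weakly, since $0\le\varphi(z_{\gamma})\le\varphi(w_{\gamma})\to 0$ for $\varphi\in E'_{+}$ and a general functional recombines from its parts --- and a \emph{separation} fact: if $z\in E_{+}$ and $\varphi(z)=0$ for all $\varphi\in E'_{+}$, then $z=0$.

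Items (v) and (iii) are essentially definitional. For (v), the statement defining $x_{\alpha}\xrightarrow{uaw}x$ and the statement defining $(x_{\alpha}-x)\xrightarrow{uaw}0$ coincide: both assert that the net $\bigl(|x_{\alpha}-x|\wedge u\bigr)$ is weakly null for each $u\in E_{+}$. For (iii), if $(y_{\beta})$ is a subnet of $(x_{\alpha})$ then $\bigl(|y_{\beta}-x|\wedge u\bigr)$ is a subnet of $\bigl(|x_{\alpha}-x|\wedge u\bigr)$, and a subnet of a weakly convergent net has the same weak limit.

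For (iv), combine the reverse triangle inequality with monotonicity of truncation to get $0\le\big||x_{\alpha}|-|x|\big|\wedge u\le|x_{\alpha}-x|\wedge u$ for every $u\in E_{+}$; the right-hand side is weakly null, so the squeeze principle gives $\big||x_{\alpha}|-|x|\big|\wedge u\to 0$ weakly, that is, $|x_{\alpha}|\xrightarrow{uaw}|x|$. For (ii), work with the sum net $(ax_{\alpha}+by_{\beta})$ indexed by the product directed set. Writing the difference from $ax+by$ as $a(x_{\alpha}-x)+b(y_{\beta}-y)$, the triangle inequality, subadditivity of $\cdot\wedge u$, and the homogeneity identity give
\[
0\ \le\ \bigl|a(x_{\alpha}-x)+b(y_{\beta}-y)\bigr|\wedge u\ \le\ |a|\bigl(|x_{\alpha}-x|\wedge(|a|^{-1}u)\bigr)+|b|\bigl(|y_{\beta}-y|\wedge(|b|^{-1}u)\bigr),
\]
with the convention that a summand is dropped when its scalar is $0$ (so the case $a=b=0$ is trivially covered). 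Each summand on the right is weakly null (use the test elements $|a|^{-1}u,|b|^{-1}u\in E_{+}$, then multiply by a scalar), hence so is their sum by linearity of weak convergence, and the squeeze principle finishes it.

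For (i), assume $x_{\alpha}\xrightarrow{uaw}x$ and $x_{\alpha}\xrightarrow{uaw}y$, and fix $u\in E_{+}$ and $\varphi\in E'_{+}$. From $|x-y|\le|x-x_{\alpha}|+|x_{\alpha}-y|$ and subadditivity of truncation, $0\le\varphi\bigl(|x-y|\wedge u\bigr)\le\varphi\bigl(|x_{\alpha}-x|\wedge u\bigr)+\varphi\bigl(|x_{\alpha}-y|\wedge u\bigr)\to 0$, so $\varphi\bigl(|x-y|\wedge u\bigr)=0$; as this holds for every $\varphi\in E'_{+}$, separation gives $|x-y|\wedge u=0$, and then taking $u=|x-y|$ forces $x=y$. (Alternatively, once (ii), (iii), (v) are available: apply (ii) to $(x_{\alpha})$ against itself with scalars $1$ and $-1$, pass to the diagonal, which is a cofinal subnet, via (iii), to get $0\xrightarrow{uaw}x-y$, and observe that the constant net $|x-y|\wedge u$ can be weakly null only if it equals $0$.) There is no genuine obstacle here; the single point deserving attention is that all the order estimates live among \emph{positive} elements, so converting them into weak-convergence statements must pass through the Banach-lattice structure of $E'$ --- everything else is bookkeeping with standard Riesz-space identities.
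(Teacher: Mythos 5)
Your proposal is correct and follows essentially the same route as the paper: your argument for (i) (testing $|x-y|\wedge u$ against positive functionals via the triangle inequality, then taking $u=|x-y|$) is exactly the paper's proof, and parts (ii)--(v), which the paper dismisses as straightforward, are filled in by you with the standard Riesz-space estimates one would expect. No issues.
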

\begin{proof}
$(i)$ Suppose that $x_{\alpha}\xrightarrow{uaw}x$ and $x_{\alpha}\xrightarrow{uaw}y$. Let $u\in E_{+}$. It follows from
$|x-y|\wedge u\leq |x_{\alpha}-x|\wedge u+|x_{\alpha}-y|\wedge u$ that $f(|x-y|\wedge u)=0$ for every $f\in E^{*}_{+}$
and, therefore, for every $f\in E^*$. It follows that $|x-y|\wedge u=0$. Taking $u=|x-y|$, we
conclude that $|x-y|=0$, hence $x = y$.

The implications $(ii)-(v)$ are straightforward.
\end{proof}
It is clear that every absolutely weakly convergent net is $uaw$-convergent; the con-
verse is true for order bounded nets. The following example shows that in general the
two convergences differ: let $E=c_0$ and $x_n = n^2 e_n$, where $(e_n)$ stands for the standard basis of $c_0$. It is easy to see that $x_n
\xrightarrow {uaw}0$,
yet it is not absolutely weakly null.

It was shown in [GTX17] that every disjoint net is $uo$-null. Let $(x_{\alpha})$ be a
disjoint net. For every $u\in E_{+}$, the net
$(|x_{\alpha}|\wedge u)$
is disjoint and order bounded,
hence weakly null. This yields the following.
\begin{lemma}\label{100}
Every disjoint net is $uaw$-null.
\end{lemma}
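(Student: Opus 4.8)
The plan is to fix $u\in E_{+}$ and apply the classical fact that an order bounded disjoint net in a Banach lattice is weakly null to the net $(|x_{\alpha}|\wedge u)$; letting $u$ range over $E_{+}$ then yields the claim directly from the definition of $uaw$-nullity.

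First I would check that $(|x_{\alpha}|\wedge u)$ is again a disjoint net: for $\alpha\neq\beta$,
$$(|x_{\alpha}|\wedge u)\wedge(|x_{\beta}|\wedge u)\le |x_{\alpha}|\wedge|x_{\beta}|=0.$$
Since also $0\le|x_{\alpha}|\wedge u\le u$, this net is order bounded (by $u$).

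It then remains to prove: if $0\le y_{\alpha}\le u$ and the $y_{\alpha}$ are pairwise disjoint, then $y_{\alpha}\to 0$ weakly. Fix $f\in E^{*}_{+}$. For any finite set $F$ of indices the vectors $(y_{\alpha})_{\alpha\in F}$ are pairwise disjoint and positive, so $\sum_{\alpha\in F}y_{\alpha}=\bigvee_{\alpha\in F}y_{\alpha}\le u$, whence
$$\sum_{\alpha\in F}f(y_{\alpha})=f\Bigl(\bigvee_{\alpha\in F}y_{\alpha}\Bigr)\le f(u).$$
Thus the family $\bigl(f(y_{\alpha})\bigr)$ of non-negative reals is summable with sum at most $f(u)$; in particular, for each $\varepsilon>0$ the set $\{\alpha\mid f(y_{\alpha})\ge\varepsilon\}$ is finite, and, using that the index set is directed, one dominates these finitely many exceptional indices by a single index to conclude $f(y_{\alpha})\to 0$. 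Since $E^{*}=E^{*}_{+}-E^{*}_{+}$, this gives $f(y_{\alpha})\to 0$ for every $f\in E^{*}$, i.e.\ $y_{\alpha}\to 0$ weakly.

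I expect the only delicate point to be precisely this last passage, from ``the exceptional index set is finite'' to genuine convergence of the net $\bigl(f(y_{\alpha})\bigr)$; everything else rests on the standard lattice identity $a+b=a\vee b+a\wedge b$ for disjoint $a,b\ge 0$ together with positivity of $f$. Alternatively one may invoke [GTX17], where disjoint nets are shown to be $uo$-null, and combine this with the order boundedness of $(|x_{\alpha}|\wedge u)$ to recover the summability estimate above; but the direct argument seems the cleanest.
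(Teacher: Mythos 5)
Your proof takes the same route as the paper's: reduce to the observation that $(|x_{\alpha}|\wedge u)$ is a disjoint, order bounded net, and then use that such nets are weakly null. The paper merely cites this last fact (the classical statement that order bounded disjoint families are weakly null), whereas you prove it; your estimate $\sum_{\alpha\in F}f(y_{\alpha})=f\bigl(\bigvee_{\alpha\in F}y_{\alpha}\bigr)\le f(u)$ is exactly the standard argument, and everything up to the final step is correct.

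The step you single out as delicate is indeed the one place where the argument, as written, does not close. From the finiteness of $S_{\varepsilon}=\{\alpha: f(y_{\alpha})\ge\varepsilon\}$ you pass to net convergence by choosing $\alpha_{0}$ dominating the exceptional indices; but an index $\alpha\ge\alpha_{0}$ can still be exceptional --- for instance $\alpha_{0}$ itself, which happens whenever some exceptional index is a greatest element of the index set. In that degenerate situation the lemma itself fails: a net indexed by $\{1,2\}$ consisting of two nonzero disjoint vectors converges (weakly, and in every other sense) to the second vector, hence is not $uaw$-null. What your argument really needs is an $\alpha_{0}$ with $\alpha_{i}\not\ge\alpha_{0}$ for every exceptional $\alpha_{i}$; such an $\alpha_{0}$ exists exactly when the index set has no maximal element, and under that convention (automatic for sequences, and implicitly assumed both here and in the $uo$-statement quoted from [GTX17]) your proof is complete. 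So the gap is small and is shared with the paper's own formulation, but you should either impose this hypothesis on the index set explicitly or restrict to sequences, and in any case replace ``dominates'' by ``strictly dominates''.
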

The next result is analogous to [DOT17, Lemma 2.11]; the proof is similar.
\begin{lemma}
Let $e\in E_{+}$ be a quasi interior point. Then $x_{\alpha}\xrightarrow{uaw}0$ iff $|x_{\alpha}|\wedge e\xrightarrow{w}0$.
\end{lemma}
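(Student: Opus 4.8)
The plan is as follows. The forward implication is immediate from the definition of $uaw$-convergence: if $x_{\alpha}\xrightarrow{uaw}0$, then $|x_{\alpha}|\wedge u\xrightarrow{w}0$ for every $u\in E_{+}$, in particular for $u=e$. So all the content lies in the converse, and I would assume $|x_{\alpha}|\wedge e\xrightarrow{w}0$ and fix an arbitrary $u\in E_{+}$, aiming to show $|x_{\alpha}|\wedge u\xrightarrow{w}0$.

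The key external ingredient is the standard description of quasi interior points (see [AB06]): since $e$ is a quasi interior point, the principal ideal generated by $e$ is norm dense in $E$, which is equivalent to $\norm{u-u\wedge ne}\to 0$ as $n\to\infty$ for every $u\in E_{+}$. So, given $\varepsilon>0$, I would first fix $n\in\mathbb{N}$ with $\norm{u-u\wedge ne}<\varepsilon$.

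Next I would bound $|x_{\alpha}|\wedge u$ using only elementary Riesz-space inequalities. From $\bigl|a\wedge b-a\wedge c\bigr|\le|b-c|$, the monotonicity of $\wedge$, the relation $u\wedge ne\le u$, and the estimate $|x_{\alpha}|\wedge ne\le n(|x_{\alpha}|\wedge e)$, one obtains
\[
0\le |x_{\alpha}|\wedge u\le |x_{\alpha}|\wedge ne+(u-u\wedge ne)\le n\bigl(|x_{\alpha}|\wedge e\bigr)+(u-u\wedge ne).
\]
Now apply an arbitrary positive functional $f\in E^{*}_{+}$: then $0\le f(|x_{\alpha}|\wedge u)\le n\,f(|x_{\alpha}|\wedge e)+f(u-u\wedge ne)$. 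By hypothesis $f(|x_{\alpha}|\wedge e)\to 0$, so $\limsup_{\alpha} f(|x_{\alpha}|\wedge u)\le f(u-u\wedge ne)\le\norm{f}\,\varepsilon$; since $\varepsilon>0$ was arbitrary, $f(|x_{\alpha}|\wedge u)\to 0$. Finally, every $f\in E^{*}$ is the difference of two positive functionals, hence $f(|x_{\alpha}|\wedge u)\to 0$ for all $f\in E^{*}$, i.e.\ $|x_{\alpha}|\wedge u\xrightarrow{w}0$. As $u\in E_{+}$ was arbitrary, $x_{\alpha}\xrightarrow{uaw}0$.

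There is no deep obstacle here; the argument is a routine adaptation of [DOT17, Lemma 2.11] with weak convergence in place of norm convergence, and it is essentially the same squeeze used to prove Lemma~\ref{100}. The only points requiring a little care are recalling the characterization of quasi interior points in the form $\norm{u-u\wedge ne}\to 0$, and the fact that weak convergence must be tested against \emph{all} functionals — which is why the $\limsup$ estimate is first run for $f\in E^{*}_{+}$ and then extended to $E^{*}$ via the Riesz decomposition.
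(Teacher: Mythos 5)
Your proof is correct and is precisely the argument the paper has in mind: the paper gives no proof at all, deferring to [DOT17, Lemma 2.11], and your write-up — truncating $u$ by $u\wedge ne$ via the quasi-interior-point property $\norm{u-u\wedge ne}\to 0$, squeezing with $|x_{\alpha}|\wedge u\le n(|x_{\alpha}|\wedge e)+(u-u\wedge ne)$ against positive functionals, and then extending to all of $E^{*}$ by the Riesz decomposition — is exactly the routine translation of that $un$ lemma to the weak setting. Nothing to add.
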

It was observed in Section 7 of [DOT17] that $un$-convergence is given by a topology,
and sets of the form $V_{u,\varepsilon}=\{x\in E, \||x|\wedge u\|<\varepsilon\}$; where $u\in E_{+}$ and $\varepsilon> 0$, form
a base of zero neighborhoods for this topology. Using a similar argument, one can
show that sets of the form
\[V_{u,\varepsilon,f}=\{x\in E: f(|x|\wedge u)<\varepsilon \},\]
where $u\in E_{+}$, $\varepsilon>0$, and $f\in E^{*}_{+}$ form a base of zero neighborhoods for a Haus-
dorff topology, and the convergence in this topology is exactly the $uaw$-convergence. Similarly to Lemmas 2.1 and 2.2 of [KMT17], for every $u\in E_{+}$, $\varepsilon>0$ , and $f \in E^{*}_{+}$,
either $V_{u, \varepsilon, f}$ is contained in $[-u,u]$, in which case $u$ is a strong unit, or $V_{u,\varepsilon, f}$ contains
a non-trivial ideal. Now, a natural conjecture can arise as follows:

{\bf Question}. Suppose $E$ is infinite-dimensional. Does every $uaw$ neighbourhood of zero contain a non-trivial ideal?

Clearly, $un$-convergence implies $uaw$-convergence. The converse is false in general;
the standard unit sequence $(e_n)$ in $\ell_{\infty}$ is $uaw$-null but not $un$-null.
\begin{theorem}\label{4}
The following are equivalent:
\begin{itemize}
\item[\em (i)]{$E$ is order continuous};
\item[\em (ii)]{$x_{\alpha}\xrightarrow{uaw}0$  $\Leftrightarrow$ $x_{\alpha}\xrightarrow{un}0$ for every net $(x_{\alpha})\subseteq E$};
\item[\em (iii)]{$x_n\xrightarrow{uaw}0$  $\Leftrightarrow$ $x_n\xrightarrow{un}0$ for every sequence $(x_n)\subseteq E$}.
\end{itemize}
\end{theorem}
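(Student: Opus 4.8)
The plan is to prove the cycle $(i)\Rightarrow(ii)\Rightarrow(iii)\Rightarrow(i)$. Since, as noted just above the statement, $un$-convergence always implies $uaw$-convergence, in (ii) and (iii) only the implication ``$uaw$-null $\Rightarrow$ $un$-null'' has content, and $(ii)\Rightarrow(iii)$ is trivial because every sequence is a net.

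For $(iii)\Rightarrow(i)$ I would argue by contraposition. If $E$ is not order continuous, one of the standard equivalent formulations of order continuity (see [AB06]) supplies $u\in E_{+}$ and a disjoint sequence $(x_{n})$ with $0\le x_{n}\le u$ and $\inf_{n}\|x_{n}\|>0$. By Lemma~\ref{100} the sequence $(x_{n})$ is $uaw$-null, whereas $\||x_{n}|\wedge u\|=\|x_{n}\|\not\to0$ shows it is not $un$-null; hence the equivalence in (iii) fails. (This is the abstract form of the sequence $(e_{n})\subseteq\ell_{\infty}$ mentioned before the theorem.)

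The real content is $(i)\Rightarrow(ii)$. Assume $E$ is order continuous and $x_{\alpha}\xrightarrow{uaw}0$. Fix $u\in E_{+}$ and set $y_{\alpha}=|x_{\alpha}|\wedge u$; then $0\le y_{\alpha}\le u$ and $y_{\alpha}\xrightarrow{w}0$, and it suffices to show $\|y_{\alpha}\|\to0$. I would first pass to the band generated by $u$: it is a closed ideal, hence an order continuous Banach lattice with $u$ as a weak order unit and with the norm induced from $E$, and weak convergence in $E$ of a net lying in this band coincides, by Hahn--Banach, with weak convergence in the band; so we may assume $E$ is order continuous with weak order unit $u$. By the classical representation of such spaces (cf. [AB06]) we may then identify $E$ with an order-dense ideal of $L_{1}(\mu)$ for a probability measure $\mu$, with $L_{\infty}(\mu)\subseteq E\subseteq L_{1}(\mu)$, both inclusions continuous, and $u$ corresponding to $\one$. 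Since $E\hookrightarrow L_{1}(\mu)$ is continuous, $y_{\alpha}\xrightarrow{w}0$ in $L_{1}(\mu)$ as well, and as $y_{\alpha}\ge0$ this gives $\|y_{\alpha}\|_{L_{1}}=\int y_{\alpha}\,d\mu\to0$.

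It now remains to deduce $\|y_{\alpha}\|_{E}\to0$ from $0\le y_{\alpha}\le\one$ and $\|y_{\alpha}\|_{L_{1}}\to0$, using order continuity of $E$ once more; this is a net-version of the dominated convergence theorem. Given $\varepsilon>0$, choose $\delta>0$ with $\delta\|\one\|_{E}<\varepsilon/2$. With $A_{\alpha}=\{y_{\alpha}>\delta\}$ we have $y_{\alpha}\le\delta\one+\one_{A_{\alpha}}$ and $\mu(A_{\alpha})\le\delta^{-1}\|y_{\alpha}\|_{L_{1}}\to0$, so it is enough to show that $\mu(A_{\alpha})\to0$ forces $\|\one_{A_{\alpha}}\|_{E}\to0$. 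If this failed, one could select indices $\alpha_{1}\le\alpha_{2}\le\cdots$ with $\mu(A_{\alpha_{k}})<2^{-k}$ and $\|\one_{A_{\alpha_{k}}}\|_{E}\ge\eta>0$; then $C_{m}:=\bigcup_{k\ge m}A_{\alpha_{k}}$ decreases with $\mu(C_{m})\le2^{-m+1}\to0$, so $\one_{C_{m}}\downarrow0$ in $E$ (viewed as an ideal of $L_{1}(\mu)$), hence $\|\one_{C_{m}}\|_{E}\to0$ by order continuity, contradicting $\eta\le\|\one_{A_{\alpha_{m}}}\|_{E}\le\|\one_{C_{m}}\|_{E}$. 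Combining the two estimates gives $\|y_{\alpha}\|_{E}<\varepsilon$ eventually, so $y_{\alpha}\xrightarrow{un}0$, and hence $x_{\alpha}\xrightarrow{un}0$. I expect the main obstacle to be exactly this final step — making the dominated-convergence argument work for nets rather than sequences — together with the bookkeeping needed to pass to the $L_{1}(\mu)$-representation; the earlier reductions and the remaining implications are routine.
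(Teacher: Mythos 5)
Your proposal is correct, and its skeleton (the cycle $(i)\Rightarrow(ii)\Rightarrow(iii)\Rightarrow(i)$, with $(iii)\Rightarrow(i)$ reduced to the fact that an order bounded disjoint sequence is $uaw$-null by Lemma~\ref{100} but has $\||x_n|\wedge u\|=\|x_n\|$ bounded away from zero when order continuity fails) coincides with the paper's. The difference is in the only substantive implication, $(i)\Rightarrow(ii)$: the paper disposes of it in one line by citing [AB06, Theorem~4.17], i.e.\ the fact that on order intervals of an order continuous Banach lattice the absolute weak topology agrees with the norm topology, whereas you reprove this ingredient from scratch --- reduce to the band generated by $u$, invoke the representation of an order continuous Banach lattice with weak unit as an ideal between $L_\infty(\mu)$ and $L_1(\mu)$ with $u=\one$, extract $\|y_\alpha\|_{L_1}\to 0$ from positivity and weak nullity, and then run a net version of dominated convergence (the splitting $y_\alpha\le\delta\one+\one_{A_\alpha}$ plus the decreasing tails $C_m$ and order continuity). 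Your dominated-convergence step is sound, including the selection of the increasing indices $\alpha_k$ from the net and the passage from $\inf_m \one_{C_m}=0$ in $L_1(\mu)$ to the same infimum in the ideal $E$. What the paper's route buys is brevity; what yours buys is a self-contained proof of the cited classical fact, at the cost of invoking the heavier representation theorem. No gaps.
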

\begin{proof}
$(i)$ implies $(ii)$ by [AB06, Theorem 4.17]. The implication $(ii)\Rightarrow(iii)$ is trivial.
To show that $(iii)\Rightarrow(i)$, let $(x_n)$ be a disjoint order bounded sequence. By Lemma \ref{100},
it is $uaw$-null. By assumption, it is $un$-null. Since it is order bounded, it is norm null.
It follows that E is order continuous.
\end{proof}
Note that $uo$-convergence does not imply $uaw$-convergence, in general: consider $E=C([0,1])$. Define the sequence $(f_n)\subseteq E$ via $f_n(0)=1$, $f_n(\frac{1}{n})=f_n(1)=0$, and linear in between. Then $(f_n)$ is $uo$-null but not $uaw$-null.
The following result is analogous to Theorem 2.1 in [G14]; the proof is similar (cf.
also Theorem 8.1 in [KMT17]). Note that sequences may be replaced by nets.
\begin{proposition}\label{5}
The following are equivalent:
\begin{itemize}
\item[\em (i)]{$E$ is order continuous};
\item[\em (ii)]{for every norm bounded sequence $({x}^{*}_n)\subseteq E^{*}$,  ${x}^{*}_n\xrightarrow{uaw}0$ implies that ${x}^{*}_n\xrightarrow{w^*}0$};
\item[\em (iii)]{for every norm bounded sequence $({x}^{*}_n)\subseteq E^{*}$, ${x}^{*}_n\xrightarrow{uaw}0$ implies that ${x}^{*}_n\xrightarrow{|\sigma|(E^{*},E)}0$}.
\end{itemize}
\end{proposition}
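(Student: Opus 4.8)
\textit{Plan of proof.} The plan is to establish the cycle $(i)\Rightarrow(iii)\Rightarrow(ii)\Rightarrow(i)$. The implication $(iii)\Rightarrow(ii)$ is immediate, since convergence in $|\sigma|(E^{*},E)$ forces convergence in $\sigma(E^{*},E)=w^{*}$: indeed $|x^{*}_{n}(x)|\le|x^{*}_{n}|(|x|)$ for every $x\in E$, so if $|x^{*}_{n}|(|x|)\to 0$ then $x^{*}_{n}(x)\to 0$.

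For $(ii)\Rightarrow(i)$ I would argue contrapositively. If $E$ is not order continuous then, by [AB06, Theorem 4.14], there are $u\in E_{+}$ and a disjoint sequence $(u_{n})$ in $[0,u]$ with $\inf_{n}\|u_{n}\|>0$ (the closed sublattice generated by $(u_{n})$ is then lattice isomorphic to $c_{0}$). From this one produces, by the standard construction, a norm bounded \emph{disjoint} sequence $(x^{*}_{n})$ in $E^{*}_{+}$ with $x^{*}_{n}(u_{n})\ge\delta>0$: pick $x^{*}_{n}\in E^{*}_{+}$ with $\|x^{*}_{n}\|\le 1$ norming $u_{n}$ and disjointify inside the Dedekind complete lattice $E^{*}$ (e.g. via the $AM$-space $E_{u}$ with unit $u$). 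Since $u\ge u_{n}\ge 0$ we get $x^{*}_{n}(u)\ge\delta$ for all $n$, so $(x^{*}_{n})$ is not $w^{*}$-null; but it is disjoint, hence $uaw$-null by Lemma~\ref{100}, contradicting $(ii)$.

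The real content is $(i)\Rightarrow(iii)$, which I would model on [G14, Theorem 2.1] (see also [KMT17, Theorem 8.1]). Assume $E$ order continuous, let $(x^{*}_{n})\subseteq E^{*}$ be norm bounded with $x^{*}_{n}\xrightarrow{uaw}0$, and fix $u\in E_{+}$; the goal is $|x^{*}_{n}|(u)\to 0$. Two facts drive the argument. First, \emph{every} $x\in E$ is an order continuous functional on $E^{*}$: if $g_{\alpha}\downarrow 0$ in $E^{*}$, then $h(y):=\inf_{\alpha}g_{\alpha}(y)$ ($y\in E_{+}$) defines a positive functional $h\in E^{*}$ with $h\le g_{\alpha}$ for all $\alpha$, whence $h\le\inf_{\alpha}g_{\alpha}=0$ and so $g_{\alpha}(x)\downarrow 0$. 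Consequently the null ideal $\{x^{*}\in E^{*}:|x^{*}|(u)=0\}$ is a projection band, and replacing each $|x^{*}_{n}|$ by its component in the carrier of $u$ (this changes neither $|x^{*}_{n}|(u)$ nor the $uaw$-nullity nor the norm bound) we may assume $u$ is strictly positive on $E^{*}$. Second, since $E$ is order continuous, every norm bounded disjoint sequence in $E^{*}$ is $w^{*}$-null — apply [G14, Theorem 2.1] to it, disjoint sequences being $uo$-null by [GTX17]. Now suppose $|x^{*}_{n}|(u)\not\to 0$ and pass to a subsequence with $|x^{*}_{n}|(u)\ge\varepsilon$. Using $x^{*}_{n}\xrightarrow{uaw}0$ (tested against the functionals $\Lambda(|x^{*}_{1}|+\cdots+|x^{*}_{k}|)$, $\Lambda>0$) together with the order continuity of $u$ on $E^{*}$, one extracts a subsequence $(x^{*}_{n_{k}})$ and a disjoint sequence $(y^{*}_{k})$ in $E^{*}_{+}$ with $0\le y^{*}_{k}\le|x^{*}_{n_{k}}|$ and $y^{*}_{k}(u)\ge\varepsilon/2$. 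Then $(y^{*}_{k})$ is norm bounded and disjoint, hence $w^{*}$-null by the second fact, so $y^{*}_{k}(u)\to 0$ — contradiction.

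The step I expect to be the main obstacle is the extraction of the disjoint sequence $(y^{*}_{k})$. A naive disjointification of $(|x^{*}_{n}|)$ yields only an \emph{almost} disjoint sequence, and promoting it to a genuinely disjoint one is the delicate point, because the usual device needs control of $\|\,|x^{*}_{n}|\wedge g\,\|$ while $uaw$-nullity gives only weak convergence of $|x^{*}_{n}|\wedge g$ in $E^{*}$. The way around this, exactly as in [G14], is to disjointify by passing to band components in the Dedekind complete lattice $E^{*}$ and to use the order continuity of the functional $u$ on $E^{*}$ precisely to make those components negligible when tested against $u$; setting this up carefully is the technical heart of the proof. (As noted in the statement, sequences may be replaced by nets throughout with no change in the argument.)
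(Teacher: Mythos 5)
Your overall route is the right one: the paper offers no actual proof of Proposition~\ref{5} beyond the remark that it is ``analogous to Theorem 2.1 in [G14]; the proof is similar (cf.\ also Theorem 8.1 in [KMT17])'', and what you have written is essentially a fleshed-out version of exactly that argument. The implications $(iii)\Rightarrow(ii)$ and $(ii)\Rightarrow(i)$ are correct as you give them; in particular, for $(ii)\Rightarrow(i)$, restricting each norming functional of $u_n$ to the band generated by $u_n$ yields the disjoint sequence $(x^*_n)\subseteq E^*_+$ with $x^*_n(u)\ge x^*_n(u_n)\ge\delta$, which is $uaw$-null by Lemma~\ref{100} but not $w^*$-null.

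The one real issue is the step you yourself flag in $(i)\Rightarrow(iii)$: the extraction of a genuinely disjoint sequence $(y^*_k)$ with $0\le y^*_k\le|x^*_{n_k}|$ and $y^*_k(u)\ge\varepsilon/2$ is asserted but not carried out, and the naive disjointification device [AB06, Lemma 4.35] requires an order bound that a norm bounded sequence in $E^*$ need not have. The clean way to close this is to bypass the hand-made extraction and invoke the Grothendieck-type disjoint sequence theorem [AB06, Theorem 4.36]: for the norm bounded set $A=\{x^*_n\}$ and the fixed $u\in E_+$, the condition that every disjoint sequence $(f_k)$ in the solid hull of $A$ satisfies $|f_k|(u)\to 0$ (which holds because $E$ is order continuous, by your Fact 2) is equivalent to: for every $\varepsilon>0$ there exists $g\in E^*_+$ with $(|x^*_n|-g)^+(u)<\varepsilon$ for all $n$. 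Then
\[
|x^*_n|(u)=\bigl(|x^*_n|-g\bigr)^+(u)+\bigl(|x^*_n|\wedge g\bigr)(u)<\varepsilon+\bigl(|x^*_n|\wedge g\bigr)(u),
\]
and the last term tends to $0$ by $uaw$-nullity of $(x^*_n)$ evaluated at $\hat u\in E^{**}$, so $\limsup_n|x^*_n|(u)\le\varepsilon$. This also renders your carrier-band reduction (Fact 1) unnecessary. With that substitution your argument is complete and coincides with the proof the paper intends.
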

\begin{corollary}
Suppose $E$ is an order continuous Banach lattice. Then every norm bounded $uaw$-Cauchy net in $E^{*}$ is $w^*$-convergent.
\end{corollary}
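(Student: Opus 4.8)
The plan is to reduce to Proposition~\ref{5} by a standard double-net argument and then invoke completeness of the scalar field. Let $(x^*_\alpha)$ be a norm bounded $uaw$-Cauchy net in $E^*$, say $\norm{x^*_\alpha}\le M$ for all $\alpha$. Since $uaw$-convergence is induced by a (linear, Hausdorff) topology --- as recorded in the discussion preceding Theorem~\ref{4}, applied now to the Banach lattice $E^*$ --- the assertion that $(x^*_\alpha)$ is $uaw$-Cauchy means exactly that the net $(x^*_\alpha-x^*_\beta)$, indexed by the pairs $(\alpha,\beta)$ with the product order, is $uaw$-null.

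First I would observe that this double net is norm bounded (by $2M$) and $uaw$-null, so Proposition~\ref{5} in its net form --- permitted by the remark just above it --- applies, because $E$ is order continuous, and yields $x^*_\alpha-x^*_\beta\xrightarrow{w^*}0$. Concretely, for every fixed $x\in E$ the scalar net $\bigl(x^*_\alpha(x)\bigr)_\alpha$ is Cauchy; as the scalar field is complete it converges, so we may set $x^*(x):=\lim_\alpha x^*_\alpha(x)$.

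It then remains to check that $x^*$ lies in $E^*$. Linearity of $x^*$ follows by passing to the limit in the linearity identities for the $x^*_\alpha$, and $\abs{x^*(x)}=\lim_\alpha\abs{x^*_\alpha(x)}\le M\norm{x}$ for all $x$, so $x^*\in E^*$ with $\norm{x^*}\le M$. By construction $x^*_\alpha(x)\to x^*(x)$ for every $x\in E$, that is, $x^*_\alpha\xrightarrow{w^*}x^*$, which is what we want.

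The only place asking for care is the use of the net version of Proposition~\ref{5}: one should confirm that its proof (modelled, per the text, on Theorem~2.1 of [G14]) is genuinely net-valid and does not rely on sequential compactness. Granting this, the argument is routine; in particular no Banach--Alaoglu input is needed, only completeness of the scalars, so I do not anticipate a real obstacle.
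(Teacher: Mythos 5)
Your proof is correct and follows essentially the same route as the paper: both reduce to Proposition~\ref{5} (in its net form) applied to the difference net $(x^*_\alpha-x^*_\beta)$ to conclude that $(x^*_\alpha)$ is $w^*$-Cauchy. The only divergence is the final step, where the paper passes from norm bounded $w^*$-Cauchy to $w^*$-convergent by citing Alaoglu's theorem, while you construct the limit functional directly from completeness of the scalars together with the uniform bound $\norm{x^*_\alpha}\le M$ --- a slightly more elementary finish that makes explicit what the Alaoglu citation is really doing.
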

\begin{proof}
Suppose $(x^{*}_{\alpha})$ is a norm bounded $uaw$-Cauchy net in $E^{*}$. By Proposition \ref{5}, $({x}^{*}_{\alpha})$ is $w^*$-Cauchy, hence $w^*$-convergent by Alaoglu's Theorem.
\end{proof}
Now, we characterize order continuity of the dual of a Banach lattice in term of $uaw$-convergence.
\begin{theorem}\label{13}
The following are equivalent:
\begin{itemize}
\item[\em (i)]{$E^*$ is order continuous};
\item[\em (ii)]{For every norm bounded net $(x_{\alpha})\subseteq E$, $x_{\alpha}\xrightarrow{uaw}0$ implies $x_{\alpha}\xrightarrow{w}0$};
\item[\em (iii)]{For every norm bounded sequence $(x_n)\subseteq E$, $x_n\xrightarrow{uaw}0$ implies $x_n\xrightarrow{w}0$}.
\end{itemize}
\end{theorem}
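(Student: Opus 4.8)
The plan is to prove the cycle $(ii)\Rightarrow(iii)\Rightarrow(i)\Rightarrow(ii)$. The implication $(ii)\Rightarrow(iii)$ is immediate, a sequence being a net. For $(iii)\Rightarrow(i)$ I would use the classical description of order continuity of a dual: it is classical (see, e.g., [AB06]) that $E^*$ is order continuous if and only if every norm bounded disjoint sequence in $E_+$ is weakly null. Indeed, if $(x_n)\subseteq E_+$ is norm bounded and disjoint, then it is $uaw$-null by Lemma \ref{100}, hence weakly null by $(iii)$, and the cited criterion yields $(i)$.

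The substance is $(i)\Rightarrow(ii)$. Assume $E^*$ is order continuous and let $(x_\alpha)$ be a norm bounded net, $\norm{x_\alpha}\le M$, with $x_\alpha\xrightarrow{uaw}0$. Since $E^*=E^*_+-E^*_+$ and $\abs{f(x_\alpha)}\le f(\abs{x_\alpha})$ for $f\in E^*_+$, while $\abs{x_\alpha}\xrightarrow{uaw}0$ by Lemma \ref{101}(iv) and stays norm bounded, it is enough to treat a norm bounded \emph{positive} $uaw$-null net $(x_\alpha)$ and show $f(x_\alpha)\to0$ for each fixed $f\in E^*_+$. Suppose not: after passing to a subnet, $f(x_\alpha)\ge\varepsilon>0$ for all $\alpha$. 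The goal is then to manufacture out of $(x_\alpha)$ a norm bounded positive \emph{disjoint} sequence $(d_k)$ with $f(d_k)\ge\varepsilon/2$ for every $k$, which contradicts $(i)$ by the very same criterion.

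To produce $(d_k)$ one chooses recursively indices $\alpha_1\le\alpha_2\le\cdots$ and forms from $x_{\alpha_k}$ --- using the hypothesis $x_\alpha\wedge u\xrightarrow{w}0$, i.e.\ $f(x_\alpha\wedge u)\to0$, with $u$ built from $x_{\alpha_1},\dots,x_{\alpha_{k-1}}$ --- a positive element $d_k\le x_{\alpha_k}$ that is disjoint from $d_1,\dots,d_{k-1}$ and still satisfies $f(d_k)\ge\varepsilon/2$; here one picks $\alpha_k$ far enough along the net that the part of $x_{\alpha_k}$ overlapping the previously used mass has negligible $f$-value. This exhaustion/disjointification is of the kind used throughout the unbounded-convergence literature (cf.\ [AB06], [DOT17], [KMT17]), and it is the main obstacle here: the weak (rather than norm) control $f(x_\alpha\wedge u)\to0$, the passage from a net to a countable selection, and the step from an approximately disjoint family to an honestly disjoint one while keeping $\inf_k f(d_k)>0$ must all be managed simultaneously and carefully. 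A tempting alternative --- applying Proposition \ref{5} to the Banach lattice $E^*$ and to $(x_\alpha)$ regarded in $E^{**}$ --- would work if $uaw$-nullity in $E$ passed to $uaw$-nullity in $E^{**}$, but that is unclear when $E$ is not order dense in its bidual, so I expect the disjointification route to be the one that goes through.
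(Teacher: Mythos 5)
Your outer structure is sound, and your $(ii)\Rightarrow(iii)$ and $(iii)\Rightarrow(i)$ are exactly the paper's argument: a norm bounded disjoint sequence is $uaw$-null by Lemma~\ref{100}, hence weakly null under $(iii)$, and the disjoint-sequence characterization of order continuity of $E^*$ ([AB06, Theorem~4.69]) gives $(i)$. The reduction in $(i)\Rightarrow(ii)$ to a positive norm bounded net and a single $f\in E^*_+$ is also correct.

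The gap is the step you yourself flag as ``the main obstacle'': the disjoint sequence $(d_k)$ is never actually constructed, and the construction is not routine. The natural candidate $d_k=(x_{\alpha_k}-n_k u_k)^+$ with $u_k=\sum_{j<k}x_{\alpha_j}$ does retain mass, since $x=x\wedge(nu)+(x-nu)^+$ and $x\wedge(nu)\le n(x\wedge u)$ give $f(d_k)\ge f(x_{\alpha_k})-n_k f(x_{\alpha_k}\wedge u_k)$, which the $uaw$-hypothesis keeps above $\varepsilon/2$; but $(x-nu)^+$ need \emph{not} be disjoint from $u$ (already in $\mathbb{R}$: $x=2$, $u=1$, $n=1$ gives $(x-u)^+=1$). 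The standard repair ([AB06, Lemma~4.35]) produces honest disjointness only by subtracting a small multiple of a common \emph{order} bound of the sequence, which a merely norm bounded sequence need not possess; pushing this through amounts to re-proving the disjoint-sequence characterization of order continuity of $E^*$, and as written your argument does not close. The paper sidesteps all of this by arguing as in [DOT17, Theorem~6.4]: order continuity of $E^*$ is equivalent (see [AB06]) to the statement that for each $f\in E^*_+$ and $\varepsilon>0$ there is $u\in E_+$ with $f\bigl((\abs{x}-u)^+\bigr)<\varepsilon$ for every $x$ in the unit ball. Then, after rescaling the net into the unit ball, $\abs{f(x_\alpha)}\le f(\abs{x_\alpha})\le f(\abs{x_\alpha}\wedge u)+f\bigl((\abs{x_\alpha}-u)^+\bigr)\le f(\abs{x_\alpha}\wedge u)+\varepsilon$, and the first term tends to zero by the definition of $uaw$-convergence. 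Substituting this two-line estimate for your disjointification (keeping your reduction to positive nets and positive functionals) completes the proof; your closing remark that Proposition~\ref{5} applied to $E^*$ does not obviously apply, because $uaw$-nullity in $E$ need not pass to $E^{**}$, is a correct observation.
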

\begin{proof}
The proof of $(i)\Rightarrow(ii)$ is similar to that of [DOT17, Theorem 6.4]. $(ii)\Rightarrow(iii)$ is
trivial. To show $(iii)\Rightarrow(i)$, observe that every disjoint norm bounded sequence in $E$
is $uaw$-null by Lemma \ref{100}, hence weakly null. Now apply [AB06, Theorem 4.69].
\end{proof}
In the following results, we characterize reflexive Banach lattices in terms of unbounded convergences.
\begin{theorem}\label{14}
The following are equivalent:
\begin{itemize}
\item[\em (i)]{$E$ is reflexive};
\item[\em (ii)]{Every norm bounded $uaw$-Cauchy net in $E$ is weakly convergent};
\item[\em (iii)]{Every norm bounded $uaw$-Cauchy sequence in $E$ is weakly convergent}.
\end{itemize}
\end{theorem}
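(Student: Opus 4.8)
The plan is to prove the cycle $(i)\Rightarrow(ii)\Rightarrow(iii)\Rightarrow(i)$; the real content is in the last implication, while $(ii)\Rightarrow(iii)$ is trivial. For $(i)\Rightarrow(ii)$: since $E$ is reflexive, so is $E^*$, hence $E^*$ is order continuous and the forward direction of Theorem \ref{13} is available. Given a norm bounded $uaw$-Cauchy net $(x_\alpha)_{\alpha\in A}$, note that "$uaw$-Cauchy" is exactly the statement that the net $(x_\alpha-x_\beta)$ indexed by $A\times A$ with the coordinatewise order is $uaw$-null; this net is norm bounded, so Theorem \ref{13} makes it weakly null, i.e.\ $(x_\alpha)$ is weakly Cauchy. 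Then $f\mapsto\lim_\alpha f(x_\alpha)$ is a bounded linear functional on $E^*$, hence equals $\widehat x$ for some $x\in E$ by reflexivity, and $x_\alpha\xrightarrow{w}x$.

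For $(iii)\Rightarrow(i)$ I would invoke the classical fact that a Banach lattice is reflexive iff it contains no subspace isomorphic to $c_0$ and none isomorphic to $\ell^1$, and rule out both. Suppose $c_0\hookrightarrow E$; then $c_0$ embeds as a sublattice, so there is a normalised positive disjoint sequence $(u_n)$ with $M:=\sup_n\|u_1+\cdots+u_n\|<\infty$. Put $x_n:=u_1+\cdots+u_n$, so $\|x_n\|\le M$. For $n>m$ and $v\in E_+$, disjointness gives $(x_n-x_m)\wedge v=\bigvee_{k=m+1}^n(u_k\wedge v)\le\sum_{k=m+1}^n(u_k\wedge v)$, while $\sum_{k=1}^N(u_k\wedge v)=\bigvee_{k\le N}(u_k\wedge v)\le v$; hence for $f\in E^*_+$ the series $\sum_k f(u_k\wedge v)$ converges, and its tails bound $f((x_n-x_m)\wedge v)$ uniformly in $n$, so $(x_n)$ is a norm bounded $uaw$-Cauchy sequence. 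By (iii) it converges weakly to some $x$, which lies in the norm closed — hence weakly closed — sublattice $L=\overline{\Span}\{u_k\}\cong c_0$; transporting through the lattice isomorphism (which, with its inverse, is weak-to-weak continuous) we would get that $(e_1+\cdots+e_n)$ converges weakly in $c_0$, which is false since its only possible weak limit is $(1,1,1,\dots)\notin c_0$. Hence $c_0\not\hookrightarrow E$, so $E$ is a KB-space and, in particular, order continuous.

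Now suppose $\ell^1\hookrightarrow E$. Since $E$ is order continuous, a copy of $\ell^1$ gives rise, via the subsequence splitting lemma, to a normalised disjoint sequence $(v_n)$ in $E$ equivalent to the unit vector basis of $\ell^1$; being disjoint and bounded it is $uaw$-null by Lemma \ref{100}, hence $uaw$-Cauchy, so by (iii) it converges weakly to some $v\in\overline{\Span}\{v_n\}\cong\ell^1$. As $\ell^1$ has the Schur property, $(v_n)$ would then be norm convergent, contradicting $\inf_{m\neq n}\|v_m-v_n\|>0$. Thus $\ell^1\not\hookrightarrow E$, and $E$ is reflexive. The main obstacle is this last step: one must upgrade an abstract copy of $\ell^1$ to a genuinely disjoint, hence $uaw$-null, sequence, and it is precisely the order continuity secured in the $c_0$-step that makes the subsequence splitting lemma applicable. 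The remaining ingredients — that reflexivity assigns weak limits to norm bounded weakly Cauchy nets, and that "$uaw$-Cauchy" is faithfully captured by the product net over $A\times A$ — are routine.
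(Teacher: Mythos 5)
Your proof is correct and follows essentially the same route as the paper: $(i)\Rightarrow(ii)$ via Theorem \ref{13} together with weak compactness of the ball, and $(iii)\Rightarrow(i)$ by excluding copies of $c_0$ and $\ell_1$, using exactly the paper's two key constructions — the disjoint $\ell_1$-basis is $uaw$-null by Lemma \ref{100}, and the partial sums of the $c_0$-basis form a norm bounded $uaw$-Cauchy sequence with no weak limit. The only divergence is that you state the reflexivity criterion in terms of subspace copies, which obliges you to upgrade to lattice copies (hence the appeal to subsequence splitting in the $\ell_1$ case, a heavier and somewhat delicately cited tool); the paper works directly with the lattice-copy version of the criterion [AB06, Theorem 4.71], under which a lattice copy of $\ell_1$ immediately hands you the disjoint normalized sequence and no splitting argument is needed.
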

\begin{proof}
$(i)\Rightarrow(ii)$ Let $(x_{\alpha})$ be a bounded $uaw$-Cauchy net in $E$. Then $(x_{\alpha})$ is weakly
Cauchy by Theorem \ref{13}, hence weakly convergent by Alaoglu's Theorem.

$(ii)\Rightarrow(iii)$ Trivially.

$(iii)\Rightarrow(i)$ It suffices to show that $E$ contains no lattice copies of $c_0$ or $\ell_1$. Suppose
that $E$ contains a lattice copy of $\ell_1$. The unit vector basis of $\ell_1$ is disjoint and,
therefore, $uaw$-null in E by Lemma \ref{100}. By assumption, it is weakly convergent in $E$
and, therefore, in $\ell_1$; a contradiction.
Suppose now that $c_0$ embeds into $E$ as a sublattice. Let $(e_i)$ be the standard basis
of $c_0$; put $x_n=\sum_{i=1}^{n}e_i$. Fix $x^{*}\in E^{*}_{+}$
and $u\in E_{+}$. Since $(x_n)$ is weakly Cauchy in $c_0$,
we have $x^{*}(|x_m-x_n|\wedge u)\leq x^{*}(x_m-x_n)\rightarrow 0$ as $m,n\rightarrow \infty$ with $n<m$. Hence, $(x_n)$
is $uaw$-Cauchy, therefore, weakly convergent in $E$, hence in $c_0$; a contradiction.

\end{proof}
Combining this with Theorem \ref{4}, we obtain the following.
\begin{corollary}
For an order continuous Banach lattice $E$, the following are equivalent:
\begin{itemize}
\item[\em (i)]{$E$ is reflexive};
\item[\em (ii)]{ Every norm bounded $un$-Cauchy net in $E$ is weakly convergent};
\item[\em (iii)]{Every norm bounded $un$-Cauchy sequence in $E$ is weakly convergent}.
\end{itemize}
\end{corollary}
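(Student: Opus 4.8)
The plan is to deduce this corollary directly from Theorem~\ref{14} together with Theorem~\ref{4}, the point being simply that under order continuity the $uaw$ and $un$ convergences coincide, so the notion of a norm bounded $uaw$-Cauchy net and a norm bounded $un$-Cauchy net are the same. Thus conditions (ii) and (iii) here are literally conditions (ii) and (iii) of Theorem~\ref{14} once one knows $uaw$-Cauchy $=$ $un$-Cauchy for nets (resp. sequences) in an order continuous Banach lattice.

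First I would record the equivalence of the two Cauchy notions. Since $E$ is order continuous, Theorem~\ref{4}(ii) gives $x_\alpha\xrightarrow{uaw}0 \Leftrightarrow x_\alpha\xrightarrow{un}0$ for every net. Apply this to the difference net indexed by pairs: given a net $(x_\alpha)$, form the net $(x_\alpha - x_{\alpha'})$ on the product index set $A\times A$ with the product order. Then ``$(x_\alpha)$ is $uaw$-Cauchy'' means exactly that this difference net is $uaw$-null, and ``$(x_\alpha)$ is $un$-Cauchy'' means it is $un$-null; hence the two Cauchy conditions are equivalent by Theorem~\ref{4}. The same argument with sequences (using the sequential statement, Theorem~\ref{4}(iii)) handles the sequential case. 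One subtlety to check is that the difference ``net'' $(x_\alpha - x_{\alpha'})_{(\alpha,\alpha')\in A\times A}$ is indeed a net in the proper sense (the product of a directed set with itself is directed), and that $uaw$-Cauchy/$un$-Cauchy are correctly phrased as null-convergence of this net; this is routine but worth a sentence.

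With that in hand the proof is immediate: for an order continuous $E$, ``norm bounded $un$-Cauchy net'' in (ii) may be replaced by ``norm bounded $uaw$-Cauchy net'', and (ii) of the corollary becomes (ii) of Theorem~\ref{14}, which is equivalent to reflexivity; likewise (iii) of the corollary coincides with (iii) of Theorem~\ref{14}. So (i)$\Leftrightarrow$(ii)$\Leftrightarrow$(iii) follows. I do not anticipate any real obstacle here — the only thing to be careful about is the phrasing of the Cauchy-as-null-net reduction so that Theorem~\ref{4} applies verbatim; everything else is a direct appeal to the two cited results.

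\begin{proof}
Since $E$ is order continuous, Theorem~\ref{4} shows that $uaw$-convergence and $un$-convergence coincide for nets (and for sequences). Given a norm bounded net $(x_\alpha)_{\alpha\in A}$ in $E$, consider the net $(x_\alpha - x_{\alpha'})$ indexed by $A\times A$ with the product direction; it is $uaw$-null precisely when $(x_\alpha)$ is $uaw$-Cauchy and $un$-null precisely when $(x_\alpha)$ is $un$-Cauchy. By Theorem~\ref{4}(ii) these two conditions are equivalent, so a norm bounded net in $E$ is $un$-Cauchy if and only if it is $uaw$-Cauchy; the analogous statement for sequences follows the same way from Theorem~\ref{4}(iii). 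Consequently condition (ii) (resp. (iii)) of the present corollary is identical to condition (ii) (resp. (iii)) of Theorem~\ref{14}, and the equivalence with reflexivity follows from that theorem.
\end{proof}
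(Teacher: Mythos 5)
Your proof is correct and follows exactly the paper's (one-line) argument: the corollary is obtained by combining Theorem~\ref{14} with Theorem~\ref{4}, using that under order continuity $uaw$-Cauchy and $un$-Cauchy coincide. The only tiny point worth adjusting is that for the sequential case the difference net is indexed by $\mathbb{N}\times\mathbb{N}$ and so is a net rather than a sequence, but since order continuity gives you the net version Theorem~\ref{4}(ii), that version applies and the argument goes through.
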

The order continuity assumption cannot be dropped: in $\ell_{\infty}$, every norm bounded
$un$-Cauchy net is norm Cauchy and, therefore, weakly convergent.

Recall that a net $(x_{\alpha})$ in a Banach lattice $E$ is $uo$-Cauchy if the net $(x_{\alpha}-x_{\beta})_{(\alpha,\beta)}$ is $uo$-null.
 \begin{theorem}
 The following are equivalent:
\begin{itemize}
\item[\em (i)]{$E$ is reflexive};
\item[\em (ii)]{Every norm bounded $uo$-Cauchy net in $E$ is weakly convergent};
\item[\em (iii)]{Every norm bounded $uo$-Cauchy sequence in $E$ is weakly convergent}.
\end{itemize}
 \end{theorem}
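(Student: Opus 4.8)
My plan is to mirror the structure of the proof of Theorem~\ref{14}, replacing $uaw$ by $uo$ throughout, since $uo$-Cauchy and $uaw$-Cauchy are closely related. First, for $(i)\Rightarrow(ii)$: let $(x_\alpha)$ be a norm bounded $uo$-Cauchy net in a reflexive $E$. Reflexive Banach lattices are order continuous, so $uo$-convergence of the net $(x_\alpha-x_\beta)_{(\alpha,\beta)}$ to $0$ implies its $un$-convergence to $0$ by [AB06, Theorem~4.17] (this is the content of Theorem~\ref{4}(i)$\Rightarrow$(ii), applied to the double-indexed net); hence $(x_\alpha)$ is $un$-Cauchy, and therefore $uaw$-Cauchy since $un$-convergence implies $uaw$-convergence. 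By Theorem~\ref{14}(i)$\Rightarrow$(ii), $(x_\alpha)$ is weakly convergent. The implication $(ii)\Rightarrow(iii)$ is trivial.

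For $(iii)\Rightarrow(i)$ I would again show that $E$ contains no lattice copy of $c_0$ or $\ell_1$ and invoke the characterization of reflexive Banach lattices. If $E$ contains a lattice copy of $\ell_1$, the unit vector basis is disjoint, hence $uo$-null by [GTX17] (quoted in the paragraph before Lemma~\ref{100}); a fortiori the difference net of the basis sequence is $uo$-null, so the sequence is $uo$-Cauchy, and it is norm bounded, so by assumption it is weakly convergent in $E$, hence in $\ell_1$ --- contradicting that $\ell_1$ is not weakly sequentially complete in the relevant sense (more precisely, the unit vector basis of $\ell_1$ has no weakly convergent subsequence, being equivalent to the $\ell_1$-basis). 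For the $c_0$ case, take $(e_i)$ the standard basis of $c_0$ and put $x_n=\sum_{i=1}^n e_i$; for $n<m$ the difference $x_m-x_n=\sum_{i=n+1}^m e_i$ is a disjoint block, and the double net $(x_m-x_n)$ runs through finite disjoint blocks, which is $uo$-null in $c_0$, hence $uo$-null in $E$ (since $c_0$ is a sublattice and $uo$-convergence passes to the lattice from a sublattice, or directly because each such block is a disjoint sequence); thus $(x_n)$ is $uo$-Cauchy and norm bounded, so weakly convergent in $E$ and hence in $c_0$, which is impossible as $(x_n)$ is not weakly Cauchy in $c_0$.

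The main obstacle I anticipate is the verification that the block sequence $x_m-x_n$ is genuinely $uo$-Cauchy as a double-indexed net, and in particular that $uo$-nullity computed in the sublattice $c_0$ (or $\ell_1$) transfers to $E$. For the $\ell_1$ case this is clean because the unit basis is a disjoint sequence and disjointness is intrinsic. For the $c_0$ case one must be slightly careful: the net $(x_m - x_n)_{n<m}$ consists of disjoint elements pairwise, but it is not literally a disjoint sequence, so one should argue that for fixed $u\in E_+$ the net $|x_m-x_n|\wedge u$ is eventually dominated by tails that order-converge to zero --- this uses that $c_0$ is an order continuous sublattice of $E$ and that $uo$-limits in an ideal (or regular sublattice) agree with those in $E$, as established in [GTX17]. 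Once these transfer facts are in place, the argument is a routine adaptation of Theorem~\ref{14}.

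\begin{proof}
$(i)\Rightarrow(ii)$ Let $(x_{\alpha})$ be a norm bounded $uo$-Cauchy net in $E$. Since $E$ is reflexive, it is order continuous, so the $uo$-null net $(x_{\alpha}-x_{\beta})_{(\alpha,\beta)}$ is $un$-null by Theorem \ref{4}; hence $(x_{\alpha})$ is $un$-Cauchy, and therefore $uaw$-Cauchy. By Theorem \ref{14}, $(x_{\alpha})$ is weakly convergent.

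$(ii)\Rightarrow(iii)$ Trivial.

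$(iii)\Rightarrow(i)$ It suffices to show that $E$ contains no lattice copy of $c_0$ or $\ell_1$. If $\ell_1$ embeds into $E$ as a sublattice, its unit vector basis is disjoint, hence $uo$-null in $E$, in particular $uo$-Cauchy; by assumption it is weakly convergent in $E$, hence in $\ell_1$, which is impossible. If $c_0$ embeds into $E$ as a sublattice, let $(e_i)$ be the standard basis of $c_0$ and set $x_n=\sum_{i=1}^{n}e_i$. For $n<m$ the elements $x_m-x_n=\sum_{i=n+1}^{m}e_i$ form, as $n,m\to\infty$, a net of disjoint blocks that is $uo$-null in $c_0$, hence $uo$-null in $E$; thus $(x_n)$ is norm bounded and $uo$-Cauchy, so weakly convergent in $E$, hence in $c_0$, which is impossible since $(x_n)$ is not weakly Cauchy in $c_0$.
\end{proof}
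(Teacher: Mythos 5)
Your proposal is correct and takes essentially the same route as the paper: the paper also reduces (i)$\Rightarrow$(ii) to Theorem~\ref{14} via the observation that in an order continuous Banach lattice every $uo$-Cauchy net is $uaw$-Cauchy (your detour through $un$-Cauchyness is just this fact spelled out, though note the relevant citation is [AB06, Theorem 4.17] rather than Theorem~\ref{4}, which concerns $uaw$ versus $un$), and it handles (iii)$\Rightarrow$(i) by excluding lattice copies of $\ell_1$ and $c_0$, transferring the $uo$-Cauchyness of $(x_n)$ from $c_0$ to $E$ by [GTX17, Corollary~3.3] --- exactly the regular-sublattice transfer you correctly flagged as the one nontrivial point.
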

 \begin{proof}
The implication $(i)\Rightarrow(ii)$ follows from Theorem \ref{14} because in an order contin-
uous Banach lattice every $uo$-Cauchy net is $uaw$-Cauchy. $(ii)\Rightarrow(iii)$ Trivially. The
proof that $(iii)\Rightarrow (i)$ is similar to that in Theorem \ref{14}. If $E$ contains a lattice copy of $\ell_1$
then the standard basis of $\ell_1$ is disjoint, hence $uo$-null, so weakly null in $E$ and,
therefore, in $\ell_1$; a contradiction. Suppose that $c_0$ embeds into $E$ as a sublattice; let
$(e_i)$ be the standard basis of $c_0$; put $x_n =\sum_{i=1}^{n} e_i$. Then $(x_n)$ is $uo$-Cauchy in $c_0$.
It follows from [GTX17, Corollary 3.3] that $(x_n)$ is $uo$-Cauchy in $E$. Hence, $(x_n)$ is
weakly convergent in $E$ and, therefore, in $c_0$; a contradiction.
 \end{proof}
[G14, Theorem 3.4] shows that every $w^*$-null net in $E^*$ is $uo$-null iff $E$ is order
continuous and atomic. [KMT17, Theorem 8.4] asserts that every $w^*$-null net in $E^*$
is $un$-null iff $E^*$ is atomic and both $E$ and $E^*$ are order continuous. We show next
that the latter result remains valid if "$un$-null" is replaced with "$uaw$-null".
\begin{proposition}
The following are equivalent:
\begin{itemize}
\item[\em (i)]{ Every $w^*$-null net in $E^*$ is $uaw$-null};
\item[\em (ii)]{$E^*$ is atomic and both $E$ and $E^*$ are order continuous}.
\end{itemize}
\end{proposition}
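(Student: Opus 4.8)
The plan is to bootstrap from [KMT17, Theorem 8.4]. The crucial observation is that hypothesis (i) already forces $E^{*}$ to be order continuous; once this is known, $uaw$-convergence and $un$-convergence coincide on nets in $E^{*}$ by Theorem \ref{4} applied to $E^{*}$, so (i) becomes precisely the hypothesis of [KMT17, Theorem 8.4], which then delivers (ii). The reverse implication (ii)$\Rightarrow$(i) is immediate: if $E^{*}$ is atomic and $E,E^{*}$ are order continuous, then by [KMT17, Theorem 8.4] every $w^{*}$-null net in $E^{*}$ is $un$-null, hence $uaw$-null since $un$-convergence implies $uaw$-convergence.

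So assume (i) and suppose, for contradiction, that $E^{*}$ is not order continuous. Then [AB06, Theorem 4.14] provides $g\in E^{*}_{+}$ and a disjoint sequence $(y_{n})$ in $[0,g]$ with $\inf_{n}\|y_{n}\|>0$. Since $E^{*}$ is Dedekind complete, put $w_{n}:=\sup_{k>n}y_{k}$; disjointness forces $w_{n}\downarrow 0$, while $0\le y_{n+1}\le w_{n}\le g$ shows that $(w_{n})$ is order bounded and $\inf_{n}\|w_{n}\|>0$. Next I would use that a sequence $w_{n}\downarrow 0$ in $E^{*}$ is automatically $w^{*}$-null, because each $x\in E_{+}$ acts on $E^{*}$ as an order continuous functional: the map $x\mapsto\inf_{n}w_{n}(x)$ on $E_{+}$ extends by additivity to a positive functional on $E$ dominated by every $w_{n}$, hence equal to $0$. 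Thus $w_{n}\xrightarrow{w^*}0$, so by (i) $w_{n}\xrightarrow{uaw}0$; being order bounded and positive, $(w_{n})$ is then weakly null in $E^{*}$.

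To finish this step I would invoke the classical fact that a decreasing weakly null sequence in a Banach lattice is norm null: by Mazur's theorem some convex combination of a tail of $(w_{n})$ has norm $<\varepsilon$, and such a combination dominates $w_{m}$ for the largest index $m$ occurring in it, so $\|w_{m}\|<\varepsilon$; since $\|w_{n}\|$ is decreasing, $\|w_{n}\|\to 0$. This contradicts $\inf_{n}\|w_{n}\|>0$, so $E^{*}$ is order continuous, and the reduction described in the first paragraph then yields (ii).

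The step I expect to be the main obstacle is exactly showing that $E^{*}$ is order continuous. One cannot simply run the argument on an arbitrary disjoint order bounded sequence in $E^{*}$, since such a sequence can be $uaw$-null without being norm null (for instance the unit vectors of $\ell_{\infty}$); it is the monotonicity of $w_{n}\downarrow 0$ that converts weak nullity into norm nullity, and the slightly delicate ingredient is the automatic $w^{*}$-nullity of a sequence $w_{n}\downarrow 0$ in $E^{*}$. Everything else is routine once [KMT17, Theorem 8.4] and Theorem \ref{4} are available.
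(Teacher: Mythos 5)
Your proposal is correct and follows essentially the same route as the paper: both directions hinge on reducing to [KMT17, Theorem~8.4] via Theorem~\ref{4} once $E^{*}$ is known to be order continuous, and the order continuity of $E^{*}$ is extracted from (i) by the same chain (decreasing to zero $\Rightarrow$ $w^{*}$-null $\Rightarrow$ $uaw$-null $\Rightarrow$ weakly null by order boundedness $\Rightarrow$ norm null by Dini/Mazur). The only difference is that the paper applies this chain directly to an arbitrary net $x^{*}_{\alpha}\downarrow 0$, which verifies order continuity immediately, whereas you argue by contradiction through the disjoint-sequence characterization and the auxiliary sequence $w_{n}=\sup_{k>n}y_{k}$ --- a slightly longer but equally valid detour.
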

\begin{proof}
$(ii)\Rightarrow(i)$ by [KMT17, Theorem 8.4]. Assume $(i)$ and suppose that $x^{*}_{\alpha}\downarrow 0$ in
$E^*$. It follows that $x^{*}_{\alpha}(x)\downarrow 0$ for each $x\in E_{+}$, so that $x^{*}_{\alpha}\xrightarrow{w^*}0$. By assumption, $x^{*}_{\alpha}\xrightarrow{uaw}0$, hence $x^{*}_{\alpha}\xrightarrow{w}0$ and, furthermore, $\|x^{*}_{\alpha}\|\rightarrow 0$ by Dini's Theorem [AB06,
Theorem 3.52]. It follows that $E^*$ is order continuous. By Theorem \ref{4}, every $uaw$-null
net in $E^*$ is $un$-null. Now apply [KMT17, Theorem 8.4].
\end{proof}
\begin{example}
Let $E =\ell_1$. $E$ is order continuous, $E^* = \ell_{\infty}$ is atomic but not order
continuous. Define $(x^{*}_{n})$ in $\ell_{\infty}$ via $x^{*}_{n} = (0,\ldots,0,1,\ldots)$, with $n$ zero terms. Then
$x^{*}_{n}\xrightarrow{w^*}0$, yet it is not weakly null, hence not $uaw$-null because it is contained in $[0, \one]$.
\end{example}

The following is a $uaw$ variant of [KMT17, Proposition 6.6]; the proof follows
easily from Theorem \ref{13}.
\begin{proposition}\label{34}
Suppose that $E^*$ is order continuous. Then every norm closed
convex norm bounded subset $C$ of $E$ is $uaw$-closed.
\end{proposition}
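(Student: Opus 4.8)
The aim is to show that the $uaw$-closure of $C$ coincides with $C$; that is, whenever a net $(x_{\alpha})\subseteq C$ satisfies $x_{\alpha}\xrightarrow{uaw}x$ in $E$, then $x\in C$. I would rely on two facts. First, being convex and norm closed, $C$ is weakly closed (Mazur's theorem). Second, since $E^{*}$ is order continuous, Theorem~\ref{13} says that every \emph{norm bounded} $uaw$-null net is weakly null. These do not quite mesh on their own: from $(x_{\alpha})\subseteq C$ we only know that $(x_{\alpha})$ is norm bounded, whereas to apply Theorem~\ref{13} to $(x_{\alpha}-x)$ we need $x$, and hence $(x_{\alpha}-x)$, to be norm bounded as well. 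So the only genuine point is to verify that the $uaw$-limit of a norm bounded net is again norm bounded, with the same bound.

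\emph{Step 1 (the crux).} I would prove: if $\|x_{\alpha}\|\le M$ for all $\alpha$ and $x_{\alpha}\xrightarrow{uaw}x$, then $\|x\|\le M$; this uses no hypothesis on $E^{*}$. The starting point is the vector-lattice inequality
\[|x|\ \le\ |x_{\alpha}|+\bigl(|x_{\alpha}-x|\wedge|x|\bigr),\]
which follows from the triangle inequality $|x|-|x_{\alpha}-x|\le|x_{\alpha}|$, the trivial bound $0\le|x_{\alpha}|$, and the identity $|x|-|x|\wedge|x_{\alpha}-x|=(|x|-|x_{\alpha}-x|)\vee 0$. Applying any $f\in E^{*}_{+}$ with $\|f\|\le1$ gives $f(|x|)\le\|x_{\alpha}\|+f(|x_{\alpha}-x|\wedge|x|)\le M+f(|x_{\alpha}-x|\wedge|x|)$. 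Since $x_{\alpha}\xrightarrow{uaw}x$, the choice $u=|x|$ yields $|x_{\alpha}-x|\wedge|x|\xrightarrow{w}0$, so $f(|x_{\alpha}-x|\wedge|x|)\to0$ and therefore $f(|x|)\le M$. Taking the supremum over all such $f$ and using the standard fact $\|x\|=\||x|\|=\sup\{f(|x|):f\in E^{*}_{+},\ \|f\|\le1\}$ for Banach lattices (see [AB06]) gives $\|x\|\le M$.

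\emph{Step 2 (conclusion), and the obstacle.} Let $M$ be a norm bound for $C$, and let $(x_{\alpha})\subseteq C$ with $x_{\alpha}\xrightarrow{uaw}x$. By Step~1, $\|x\|\le M$, so $(x_{\alpha}-x)$ is norm bounded (by $2M$) and, by Lemma~\ref{101}(v), $uaw$-null. Since $E^{*}$ is order continuous, Theorem~\ref{13} gives $x_{\alpha}-x\xrightarrow{w}0$, i.e.\ $x_{\alpha}\xrightarrow{w}x$; as $C$ is weakly closed, $x\in C$, so $C$ is $uaw$-closed. The main obstacle is precisely Step~1: weak convergence does not imply $uaw$-convergence (for instance the Rademacher sequence in $L_{2}[0,1]$ is weakly null but not $uaw$-null), so the $uaw$-topology is not comparable with the weak topology on all of $E$ and one cannot simply invoke that $C$ is weakly closed; the inequality above is what is needed to trap the limit in a ball on which Theorem~\ref{13} is available. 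Everything else — Mazur's theorem together with a one-line appeal to Theorem~\ref{13} — is routine.
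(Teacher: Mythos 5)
Your proof is correct, and it follows exactly the route the paper intends (the paper gives no details beyond ``the proof follows easily from Theorem~\ref{13}''): combine Mazur's theorem with Theorem~\ref{13} applied to $(x_{\alpha}-x)$. However, the step you single out as ``the crux'' --- showing that the $uaw$-limit $x$ satisfies $\|x\|\le M$ --- is not needed at all. The limit $x$ is a single fixed vector of the Banach space $E$, so $\|x\|$ is finite automatically, and $\|x_{\alpha}-x\|\le M+\|x\|$ already makes $(x_{\alpha}-x)$ a norm bounded net; Theorem~\ref{13} only requires \emph{some} finite bound, not the bound $M$. So the two-line argument you dismiss as ``routine'' is in fact the whole proof, and your Step~1 (which is itself correct, and proves the mildly interesting fact that $uaw$-limits of nets in $MB_E$ stay in $MB_E$) is superfluous here. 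Your closing remark is well taken, though: the real reason one cannot bypass Theorem~\ref{13} is that the $uaw$-topology is not finer than the weak topology (your Rademacher example is apt), so weak closedness of $C$ alone says nothing about $uaw$-closedness; the order continuity of $E^{*}$ enters precisely to convert $uaw$-convergence of the bounded net into weak convergence.
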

It is proved in [KMT17, Theorem 7.5] that the unit ball $B_E$ is $un$-compact iff $E$ is
an atomic $KB$-space.

\begin{proposition}\label{36}
$B_E$ is $uaw$-compact iff $E$ is an atomic $KB$-space.
\end{proposition}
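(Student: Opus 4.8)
My plan is to reduce the statement to its known $un$-analogue, [KMT17, Theorem 7.5], by establishing order continuity of $E$ in the nontrivial direction. For the ``if'' part I would argue: if $E$ is an atomic $KB$-space then $B_E$ is $un$-compact by [KMT17, Theorem 7.5]; since $un$-convergence implies $uaw$-convergence, the $uaw$-topology is coarser than the $un$-topology (explicitly $\{x:\||x|\wedge u\|<\varepsilon/(1+\|f\|)\}\subseteq V_{u,\varepsilon,f}$), and compactness passes to coarser topologies, so $B_E$ is $uaw$-compact.

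For the ``only if'' part, assume $B_E$ is $uaw$-compact. The heart of the matter is to show that $E$ is order continuous; once that is done, Theorem~\ref{4} makes the $uaw$- and $un$-topologies coincide, so $B_E$ is $un$-compact and [KMT17, Theorem 7.5] finishes it. To get order continuity I would argue by contradiction: if $E$ is not order continuous, then by [AB06, Theorem 4.14] there are $w\in E_+$ and a disjoint sequence $(y_k)\subseteq[0,w]$ with $\|y_k\|\ge\delta>0$; after rescaling assume $w\in B_E$, so the partial sums $s_n:=\sum_{k=1}^n y_k=\bigvee_{k=1}^n y_k$ lie in $[0,w]\subseteq B_E$ and increase. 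The key computation is that, using $(\bigvee_k a_k)\wedge u=\bigvee_k(a_k\wedge u)$ together with the disjointness of the $y_k\wedge u$, one gets for $n<m$ and $u\in E_+$ that $|s_m-s_n|\wedge u=(s_m\wedge u)-(s_n\wedge u)$; since $(f(s_n\wedge u))_n$ is increasing and bounded by $f(u)$ for each $f\in E^*_+$ it converges, so $f(|s_m-s_n|\wedge u)\to0$ and $(s_n)$ is $uaw$-Cauchy. By $uaw$-compactness it has a $uaw$-convergent subnet, hence (being Cauchy in a linear topology) $s_n\xrightarrow{uaw}s$ for some $s\in B_E$.

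Next I would show that $s$ is an upper bound of $(s_n)$: the positive net $(s_n-s_{n_0})_{n\ge n_0}$ $uaw$-converges to $s-s_{n_0}$ and, by Lemma~\ref{101}(iv), also to $|s-s_{n_0}|$, so uniqueness (Lemma~\ref{101}(i)) gives $s-s_{n_0}\ge0$. Testing $s_n\xrightarrow{uaw}s$ against $u=s$ then yields $s-s_n=|s_n-s|\wedge s\xrightarrow{w}0$, i.e.\ $s_n\xrightarrow{w}s$; since $0\le s_n\uparrow$, Dini's Theorem [AB06, Theorem 3.52] forces $\|s-s_n\|\to0$, so $(s_n)$ is norm-Cauchy, contradicting $\|s_{k+1}-s_k\|=\|y_{k+1}\|\ge\delta$. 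Hence $E$ is order continuous.

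I expect this order-continuity step to be the main obstacle: one has to recognize that the partial sums of a norm-bounded-below disjoint sequence are $uaw$-Cauchy (so that compactness produces a $uaw$-limit) and then promote that limit to a \emph{weak} limit of a monotone sequence in order to invoke Dini. The remaining ingredients — the comparison of the $uaw$- and $un$-topologies and the invocation of [KMT17, Theorem 7.5] — are routine.
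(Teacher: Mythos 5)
Your proof is correct, but the way you obtain order continuity in the ``only if'' direction is genuinely different from the paper's. The paper argues in three lines: order intervals are $uaw$-closed by Lemma~\ref{101} and sit inside multiples of $B_E$, hence they are $uaw$-compact; on an order bounded set $uaw$-convergence coincides with absolute weak convergence, so order intervals are absolutely weakly compact, hence weakly compact; and weak compactness of order intervals is a standard characterization of order continuity. You instead run a hands-on disjoint-sequence argument: starting from a norm-bounded-below disjoint sequence in an order interval, you show its partial sums are $uaw$-Cauchy, use compactness to extract a $uaw$-limit, promote it (via Lemma~\ref{101}(i),(iv) and testing against $u=s$) to a weak limit of an increasing sequence, and invoke Dini to reach a norm contradiction. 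The paper's route is shorter and leans on the equivalence ``order continuous $\Leftrightarrow$ order intervals weakly compact''; yours avoids that equivalence, using only the disjoint-sequence characterization (the same one the paper uses in Theorem~\ref{4}) plus Dini, and it reuses the $uaw$-Cauchy machinery of Theorem~\ref{14}, so it is more self-contained at the cost of length. All the individual steps check out, including the identity $|s_m-s_n|\wedge u=(s_m\wedge u)-(s_n\wedge u)$ from disjointness and the passage from a convergent subnet of a Cauchy net to convergence of the net itself in the linear $uaw$-topology. The ``if'' direction via coarseness of the $uaw$-topology relative to the $un$-topology is also fine and essentially matches the paper's appeal to Theorem~\ref{4} and [KMT17, Theorem 7.5].
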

\begin{proof}
It suffices to show that $uaw$-compactness of $B_E$ implies that $E$ is order contin-
uous; the result then follows from Theorem \ref{4} and [KMT17, Theorem 7.5]. Suppose
that $B_E$ is $uaw$-compact. Since order intervals are $uaw$-closed by Lemma \ref{101}, they are
$uaw$-compact, hence absolutely weakly compact, and, therefore, weakly compact. It
follows that $E$ is order continuous.
\end{proof}

{\bf Further remarks}. A preliminary version of this paper was posted on arXiv on
August 6, 2016. In particular, it was proved there that in order continuous Banach
lattices, $uaw$-convergence (and, therefore, $un$-convergence) is stable under passing to
and from a sublattice; this was independently proved in [KMT17] as Corollary 4.6.
Recently, there have been some results regarding unbounded convergence in locally solid vector lattices; see [DEM17, T17] for more details.

{\bf Acknowledgements}.
This note would not have existed without inspiring and worthwhile suggestions of V. G. Troitsky, my friend and my colleague. Thanks is also due to Niushan Gao and Foivos Xanthos for useful remarks. I would like to have a deep gratitude toward the referee for  invaluable comments which improved the paper in the present form.

\end{document}